\newtheorem{theorem}{Theorem}
\newtheorem{lemma}[theorem]{Lemma}
\title{Solubility of Additive Forms of Twice Odd Degree over Totally Ramified Extensions of $\mathbb{Q}_2$}
\author{Drew Duncan}
\date{\today}
\begin{document}

\maketitle

\begin{abstract}
We prove that an additive form of degree $d=2m$, $m$ odd over any totally ramified extension of $\mathbb{Q}_2$ has a nontrivial zero if the number of variables $s$ satisifies $s \ge \frac{d^2}{4} + 3d + 1$.
\end{abstract}

\section{Introduction}
Consider additive forms of degree $d$ over a p-adic field $K$ in $s$ variables:
\begin{equation}
\label{form}
a_1x_1^{d} + a_2x_2^{d} + \ldots + a_{s}x_{s}^{d}.
\end{equation}
Let $\Gamma^*(d, K)$ represent the minimum number such that any additive form of degree $d$ over $K$ in at least that many variables has a nontrivial zero.
A refinement of a famous conjecture by Artin holds that $\Gamma^*(d, K) \le d^2 + 1$ for any $p$-adic field $K$.
The seminal paper in this direction by Davenport and Lewis \cite{davenport1963homogeneous} showed that the bound holds for all fields of $p$-adic numbers $\mathbb{Q}_p$, and that equality holds when $d = p - 1$.
In fact, a similar argument gives an additive form in $d^2$ variables with no nontrivial zero for any totally ramified $p$-adic field and $d = p - 1$.

Aside from combinations of degrees and fields which make the problem relatively trivial, this bound of $d^2+1$ has been extended to proper extensions of $\mathbb{Q}_p$ in only a few restricted cases:
In \cite{leep2018diagonal} it was shown that $\Gamma^*(d, K) \le d^2+1$ for $K$ any unramified extension of $\mathbb{Q}_p$ with $p$ odd.
It was demonstrated in \cite{duncan2022quarticsolubility} that $\Gamma^*(4, K) \le d^2+1$ for any $K$ of the four (out of the six total) ramified quadratic extensions $\mathbb{Q}_2(\sqrt{2})$, $\mathbb{Q}_2(\sqrt{-2})$, $\mathbb{Q}_2(\sqrt{10})$, and $\mathbb{Q}_2(\sqrt{-10})$.
And, most recently it was proven in \cite{MGK} that $\Gamma^*(d, K) \le d^2 + 1$ for $K$ any of the seven quadratic extensions of $\mathbb{Q}_2$ and $d$ not a power of 2.
Note that in all of these cases the degree of ramification is at most two.  In this paper, I will show that the bound holds for an infinite family of degrees over any totally ramified extension of $\mathbb{Q}_2$ of arbitrarily large degree.  In fact (aside from the case $d=2$, which is known by other means) I will show that a bound \textit{strictly lower} than $d^2+1$ holds in all of these cases.

\begin{theorem}
\label{main}
Let $d=2m$, with $m$ any odd number at least 3, and $K$ be any totally ramified extension of $\mathbb{Q}_2$.  Then,
$$\Gamma^*(d, K) \le \frac{d^2}{4} + 3d + 1.$$
\end{theorem}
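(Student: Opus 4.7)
The plan is to reduce the problem to a solubility question for a quadratic form over $K$, exploiting the factorization $d=2m$ with $m$ odd. The key observation is that because $m$ is odd, hence coprime to the residue characteristic $2$, the $m$-th power map is a bijection on the unit group $\mathcal{O}_K^*$: the residue field is $\mathbb{F}_2$, which is fixed pointwise, while on the principal units $1+\pi\mathcal{O}_K$ the map is bijective since $m$ is a $2$-adic unit (one can verify this directly by recursively solving $x^m=y$ level by level in the $\pi$-adic expansion). Writing $a_i x_i^d = a_i(x_i^m)^2$, any nontrivial zero $(y_1,\dots,y_s)$ of the quadratic form
\begin{equation*}
Q(y) = a_1 y_1^2 + \dots + a_s y_s^2
\end{equation*}
in which each $y_i$ is either $0$ or a unit lifts, via $x_i = y_i^{1/m}$ (the unique unit $m$-th root), to a nontrivial zero of the original additive form. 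Thus it suffices to produce such a ``unit-or-zero zero'' of $Q$.

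With this reduction in hand, I would normalize the additive form in the Davenport--Lewis sense so that every coefficient valuation lies in $\{0,1,\dots,d-1\}$ and no $K$-linear change of variables increases the number of variables at the lowest level. Partitioning the variables by the parity of their coefficient valuation collects the $d$ original levels into two ``quadratic parities,'' each comprising $m$ levels. After rescaling the $y_i$ by powers of $\pi$, $Q$ becomes normalized with coefficient valuations in $\{0,1\}$. A standard contraction argument caps the number of variables at each of the $d$ original levels, translating into at most roughly $m^2 = d^2/4$ variables per quadratic parity before a contraction is forced. Once the hypothesis $s \geq \frac{d^2}{4}+3d+1$ exceeds this cap, a pigeonhole argument between the two parities combined with a level-by-level count produces either (i) a single original level with enough variables to extract a mod-$\pi$ unit zero of the appropriate sub-form, or (ii) two variables at distinct levels whose suitable combination has coefficient of strictly lower valuation, initiating a chain of improvements that ultimately yields a unit-or-zero zero of $Q$.

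The main obstacle is the Hensel lifting step in residue characteristic $2$. Since $\nabla Q = (2a_iy_i)_i$ has valuation at least $v(2) = e := [K:\mathbb{Q}_2]$, a naive Hensel argument demands an approximate zero modulo $\pi^{2e+1}$, a precision that grows with the ramification, whereas the theorem's bound is required to be uniform in $e$. This is exactly why the reduction to a quadratic form is essential: classical local quadratic-form theory (e.g.\ that the $u$-invariant of a $2$-adic local field is $4$) supplies isotropic zeros once enough variables are present, independently of $e$. The remaining delicate task is to upgrade such an isotropic zero, whose coordinates may have assorted valuations, into one whose coordinates all lie in $\mathcal{O}_K^*\cup\{0\}$, so that the reduction of Step~1 applies. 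The extra $3d+1$ variables furnish the budget for this upgrade, and tracking that budget against the $m^2$ per-parity cap is what produces the precise constants in the bound.
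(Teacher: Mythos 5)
Your opening reduction is correct as far as it goes: since $m$ is odd and the residue field of $K$ is $\mathbb{F}_2$, the $m$-th power map is a bijection on $\mathcal{O}^*$, so a nontrivial zero of the degree-$d$ form is exactly the same thing as a nontrivial zero of $Q(y)=\sum a_i y_i^2$ in which every $v(y_i)$ is a multiple of $m$ (after normalizing levels, a zero with every coordinate in $\mathcal{O}^*\cup\{0\}$). But this is a restatement of the problem, not a simplification, and the step you lean on for the payoff --- that the $u$-invariant of $K$ being $4$ ``supplies isotropic zeros \ldots independently of $e$'' --- does not do what you need. Isotropy gives vectors with no control whatsoever over the valuations of their coordinates, and the valuation constraint is the entire content of the problem. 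Already over $\mathbb{Q}_2$ the form $y_1^2+\cdots+y_5^2$ is isotropic (e.g.\ $2^2+1^2+1^2+1^2+t^2=0$ with $t^2=-7$), yet it has no nontrivial zero with every $y_i\in\mathcal{O}^*\cup\{0\}$, since a sum of $k$ unit squares is $\equiv k\pmod 8$ for $1\le k\le 5$. So the residue-characteristic-$2$ Hensel difficulty you correctly identify is not circumvented by passing to $Q$; it reappears, undiminished, in your ``upgrade'' step, and the required precision $\pi^{2e+1}$ cannot be wished away --- it has to be reached constructively.

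That upgrade step, which is the actual theorem, is left entirely unsubstantiated: ``a standard contraction argument caps the number of variables,'' ``a pigeonhole argument \ldots produces either (i) or (ii),'' and ``a chain of improvements'' name no mechanism, and nothing in the sketch explains where $\frac{d^2}{4}+3d+1$ comes from or why the conclusion is uniform in $e$. Compare with what the paper actually has to build: the identity $(1+\pi^k)^d\equiv 1+\pi^{2k}\pmod{\pi^{2k+1}}$ for $k<e$, which lets a contraction out of a given level be steered past or halted at each even offset below $2e$ (the ``free'' levels); a combinatorial lemma on assigning pairs to bins; three configuration lemmas ($m+7$ variables in one level, or $m+3$ and $2$ in adjacent levels, or $4$ and $4$ in adjacent levels) each of which exploits the free levels to push a variable up to level $2e+1$ where the specialized Hensel lemma applies; and a normalization count showing $s\ge\frac{d^2}{4}+3d+1$ forces one of these configurations. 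Your proposal contains no analogue of any of these, so as written it is a correct change of notation followed by a gap exactly where the proof has to live.
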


\section{Preliminaries}

Let $K$ denote a totally ramified extension of $\mathbb{Q}_2$, $\mathcal{O}$ denote its ring of integers, and $e$ denote its degree of ramification, so that $2=u\pi^e$ where $u$ is a unit in $\mathcal{O}$.
Without loss of generality, assume $a_i \in \mathcal{O}\backslash\{0\}$.
Let $\pi$ be a uniformizer (generator of the unique maximal ideal) of $\mathcal{O}$.
Any $a \in \mathcal{O}$ can be written $a = \pi^\ell(a_0 + a_1\pi + a_2\pi^2 + a_3\pi^3 + \ldots)$, with $a_i \in \{0,1\}$ and $a_0 \ne 0$.
I will refer to $\ell$ as the \textit{level} of the variable.

By the change of variables $\pi^r x^d = \pi^{r-id}(\pi^i x)^d = \pi^{r-id}y^d$ for $i \in \mathbb{Z}$, we will consider the level of a variable modulo $d$ for the remainder of the paper.  Multiplying a form by $\pi$ increases the level of each variable by one, and does not affect the existence of a nontrivial zero.  Considering the levels of variables modulo $d$, applying this transformation any number of times effects a cyclic permutation of the levels.
%For this reason, any two types that differ by a cyclic permutation of their parameters will be considered as the same type.
This is useful for arranging the variables in an order which is more convenient, a process to which we will refer as \textit{normalization}. See Lemma 3 of \cite{davenport1963homogeneous} for a proof of the following Lemma.

\begin{lemma}
Given an additive form of degree $d$ in an arbitrary local field $K$, let $s$ be the total number of variables, and $s_i$ be the number of variables in level $i \pmod{d}$. By a change of variables, the form may be transformed to one with:
\begin{align}
\begin{split}
s_0 \ge \frac{s}{d},  
\end{split}
\begin{split}
s_0 + s_1 \ge \frac{2s}{d},
\end{split}
\begin{split}
\ldots,
\end{split}
\begin{split}
s_0 + \ldots + s_{d-1} = s.
\end{split}
\end{align}
\end{lemma}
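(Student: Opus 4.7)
The plan is to reduce the lemma to a purely combinatorial statement about partial sums and then pick the correct cyclic shift by minimizing a suitable deficit function.

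First I would introduce the partial sums $S_j = s_0 + s_1 + \cdots + s_j$ for $j = 0, 1, \ldots, d-1$ (with the convention $S_{-1} = 0$) and the deficit $D_j = S_j - (j+1)s/d$. The inequalities asserted by the lemma are exactly $D_0, D_1, \ldots, D_{d-1} \ge 0$, and $D_{d-1} = 0$ holds automatically since $S_{d-1} = s$.

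Next, using that multiplication of the form by $\pi^{d-k}$ shifts each level by $-k$ modulo $d$, the post-shift level counts satisfy $s_j^{(k)} = s_{(j+k) \bmod d}$. Splitting the calculation of the new partial sum according to whether the index wraps around yields
\[
S_j^{(k)} \;=\; \begin{cases} S_{k+j} - S_{k-1}, & k+j < d, \\ s - S_{k-1} + S_{k+j-d}, & k+j \ge d, \end{cases}
\]
and a short rearrangement shows that in both cases the target inequality $S_j^{(k)} \ge (j+1)s/d$ is equivalent to $D_{(k+j) \bmod d} \ge D_{k-1}$, where I adopt the convention $D_{-1} := 0$.

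Finally, as $j$ ranges over $\{0, 1, \ldots, d-1\}$, the index $(k+j) \bmod d$ hits every element of $\{0, 1, \ldots, d-1\}$. So all $d$ partial-sum inequalities hold simultaneously for a given $k$ exactly when $D_{k-1}$ is a lower bound for every $D_l$. It therefore suffices to choose $k$ so that $D_{k-1}$ equals $\min_l D_l$, taking $k = 0$ in the edge case where the minimum is attained at $l = d-1$ (using $D_{-1} = 0 = D_{d-1}$). Such a $k$ always exists, so there is no real obstruction; the only care required is the index bookkeeping at the wrap-around, which is the source of the two-case formula for $S_j^{(k)}$.
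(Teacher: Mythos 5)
Your argument is correct. The paper does not prove this lemma itself but defers to Lemma~3 of Davenport and Lewis, whose argument is an extremal choice of the same kind: they select, among the $d$ cyclic shifts, the equivalent form minimizing $\operatorname{ord}_\pi(a_1\cdots a_s)$, and derive the partial-sum inequalities from that minimality. Your deficit function $D_j$ is just a reindexed version of the same quantity (the shifts of $\operatorname{ord}_\pi\prod a_i$ differ from one another by expressions in the $D_j$'s), so minimizing $D_{k-1}$ is equivalent to their choice; the wrap-around case analysis and the convention $D_{-1}=D_{d-1}=0$ are handled correctly, so there is no gap.
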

Consider a collection of terms $a_{i_1}x_{i_1}^d + a_{i_2}x_{i_2}^d + \ldots + a_{i_n}x_{i_n}^d$.
Some assignment $x_{i_j} = b_{i_j}$ of values to the variables yields some value $b$.
If instead we assign $x_{i_j} = b_{i_j}y$, then the terms are replaced with the new term $by^d$.
I call this replacement a \textit{contraction} of the original variables.
The level of the resulting variable is at least as high as the lowest level of the variables used in the contraction.
I proceed by showing that given a configuration of variables enough contractions can be performed so that a variable is produced which is at a sufficiently high level relative to the levels of the variables used to produce it.
A nontrivial zero then follows from the following version of Hensel's Lemma specialized to additive forms of degree $d=2m$ over totally ramified extensions of $\mathbb{Q}_2$.  (For a proof, see \cite{leep2018diagonal}.)

\begin{lemma}[Hensel's Lemma]
\label{quartic_hensels_lemma}
Let $x_i$ be a variable of (\ref{form}) at level $k$.
Suppose that $x_i$ can be used in a contraction of variables (or one in a series of contractions) which produces a new variable at level at least $2e+1$.
Then (\ref{form}) has a nontrivial zero.
\end{lemma}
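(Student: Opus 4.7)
The plan is to invoke the strong form of Hensel's Lemma: if $P(x) \in \mathcal{O}[x]$ and $b \in \mathcal{O}$ satisfy $v(P(b)) > 2 v(P'(b))$, then $P$ has a root in $\mathcal{O}$ near $b$. I would apply this by viewing the form (\ref{form}) as a one-variable polynomial in a single coordinate at a time, with the remaining coordinates frozen at values supplied by the contraction chain.

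First I would unpack what the hypothesis provides. A series of contractions $x_{i_j} = b_{i_j} y$ collapses the variables in the chain into a single virtual variable $y$ with coefficient $c = \sum_j a_{i_j} b_{i_j}^d$, and the assertion that this new variable sits at level at least $2e+1$ translates to $v(c) \ge 2e+1$. Substituting the $b_{i_j}$ back into $F = \sum_r a_r x_r^d$ and setting the uninvolved variables to zero produces a nonzero point $b \in \mathcal{O}^s$ (at least one $b_{i_j}$ being a unit) at which $v(F(b)) \ge 2e+1$.

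Next I would apply Hensel coordinatewise. Freezing all coordinates except some $x_j$ yields the polynomial $P(x_j) = a_j x_j^d + C \in \mathcal{O}[x_j]$ with $v(P(b_j)) \ge 2e+1$ and $P'(b_j) = d\, a_j b_j^{d-1}$. The pivotal input is that $d = 2m$ with $m$ odd and $K/\mathbb{Q}_2$ totally ramified of degree $e$, so $v(d) = v(2) + v(m) = e + 0 = e$. If $x_j$ is chosen with $v(a_j) = 0$ and $b_j$ a unit, then $v(P'(b_j)) = e$, and the Hensel condition $v(P(b_j)) \ge 2e+1 > 2e = 2 v(P'(b_j))$ is fulfilled. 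Hensel's Lemma then supplies $b_j' \in \mathcal{O}$ with $F(\ldots, b_j', \ldots) = 0$, and since the other $b_{i_j}$ are preserved and not all zero, this is a nontrivial zero.

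The principal obstacle is confirming that some coordinate $x_j$ with $v(a_j) = v(b_j) = 0$ is always present in the contraction chain, particularly when $x_i$ itself lies at level $k > 0$. This is to be extracted from the combinatorial structure of the contraction process: each contraction can only raise the level of its output above the minimum input level, so a final level of at least $2e+1$ forces the chain to ultimately draw on base-level variables with unit value assignments, and these furnish the coordinate at which Hensel is applied.
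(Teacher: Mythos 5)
The paper does not actually prove this lemma itself---it defers to \cite{leep2018diagonal}---so I am assessing your argument on its own terms. Your overall strategy (freeze all coordinates but one, view (\ref{form}) as $P(x_j)=a_jx_j^d+C$, and apply the strong form of Hensel's Lemma using $v(d)=v(2m)=e$) is the standard route, and it is sound precisely when the contraction chain supplies a pivot coordinate with $v(a_j)=v(b_j)=0$: then $v(P'(b_j))=e$ and $v(P(b_j))\ge 2e+1>2e$ closes the argument. That situation does occur in every application made in this paper, since after normalization each chain in the lemmas of Section 3 begins with a pair of level-$0$ variables assigned unit values.

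The gap is exactly where you flagged it, and your proposed repair does not work. First, the monotonicity you invoke runs the wrong way: the output of a contraction lies at a level \emph{at least} the minimum input level, so an output at level $\ge 2e+1$ is entirely consistent with every input sitting at a positive level (two variables at level $2e$ contract to level $2e+1$ without any level-$0$ variable ever appearing). Nothing forces the chain to ``draw on base-level variables.'' Second, even granting a pivot at level $k>0$ with unit value, your Hensel condition becomes $v(P(b_j))>2v(P'(b_j))=2(e+k)$, which is strictly stronger than the hypothesis $v(F(b))\ge 2e+1$; so the coordinate-freezing argument cannot cover the case the lemma is explicitly phrased to include (a variable ``at level $k$''). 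The standard fix is to perturb multiplicatively rather than additively in the coordinate: set $x_i=b_i(1+z)$ and reduce to showing that $1+w$ is a $d$-th power, where $w=-F(b)/(a_ib_i^d)$, so $v(w)=v(F(b))-v(a_ib_i^d)$. Since $d=2m$ with $m$ odd, $1+w$ is an $m$-th power as soon as $v(w)\ge 1$ (Hensel with unit derivative $m$) and a square as soon as $v(w)>2v(2)=2e$, hence a $d$-th power once $v(w)\ge 2e+1$. This removes the need to hunt for a level-$0$ pivot, but it also shows what the hypothesis must really assert: the produced variable must lie at least $2e+1$ levels \emph{above} $v(a_ib_i^d)$, i.e.\ above the level of $x_i$ when $b_i$ is a unit. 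You should prove that relative version (which is how the lemma is actually used, always with $k=0$ after normalization) rather than the absolute-level statement, which your argument as written does not establish.
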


As a result, we will henceforth assume whenever possible that a contraction or series of contractions produces a variable at most $2e$ levels higher than the variables used in the contractions.

Consider two variables in the same level with coefficients $a = \pi^\ell(a_0 + a_1\pi + \ldots)$ and $b = \pi^\ell(b_0 + b_1\pi + \ldots)$ with $a_i, b_i \in \{0,1\}$.
Because $a_0$ and $b_0$ are nonzero, they must both be $1$, so $a + b = 2 + (a_1 + b_1)\pi + \ldots = (a_1 + b_1)\pi + \ldots + \pi^e + \ldots$, and their sum is divisible by $\pi$.
Thus, any two variables can be contracted at least one level higher.

Note that $(1 + \pi^k)^d = (1 + 2\pi^k + \pi^{2k})^m = (1 + u\pi^{k+e} + \pi^{2k})^m$.
If $k < e$, then $\pi^{2k}$ is a lower order term than every term of $u\pi^{k+e}$, and so $(1 + \pi^k)^d = (1 + \pi^{2k} + \ldots)^m = 1 + m\pi^{2k} + \ldots \equiv 1 + \pi^{2k} \pmod{\pi^{2k+1}}$.
Suppose a contraction involving a term $a_i x_i$ in level $\ell$ results in a variable in level $\ell + 2k$ with $k < e$.
That is, there is some substitution for the variables such that the resulting sum is $c = \pi^{\ell + 2k}(1 + c_1\pi + \ldots)$.
Replacing the substitution $\beta_i$ of $x_i$ with $\beta_i(1+\pi^k)$, the resulting sum is $c + a_i \pi^{2k} = \pi^{\ell + 2k}(1 + c_1\pi + \ldots) + \pi^{\ell + 2k}(1 + a_{i_1}\pi + \ldots) \equiv 0 \pmod{\pi^{\ell + 2k + 1}}$.
Thus the contraction which would have a produced a variable $2k$ levels higher can be arranged so that the new variable is instead produced at least $2k+1$ levels higher.
Similarly, if the contraction would have gone up at least $2k+1$ levels, it can be arranged to go up exactly $2k$ levels, for any $k<e$.
I will refer to this ability to make a contraction involving a variable from level $\ell$ bypass or stop at any of the levels $\ell+2, \ell+4, \ldots, \ell+2e-2$ as the resulting variable being \textit{free} in those levels.
Note that if a variable which is free in some levels is used in a subsequent contraction, then the new resulting variable is also free in those levels (as well as new levels arising from this contraction) as the earlier contraction can be arranged to produce an additional term of the appropriate order without affecting any terms of lower order.

\section{Proof}

\begin{lemma}
\label{bins}
Let every unordered pair among $n$ objects be assigned to one of $m$ bins.  If $n\ge m+3$, then two disjoint pairs get assigned to the same bin.
\end{lemma}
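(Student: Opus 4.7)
The plan is to argue by contradiction: assume no bin contains two disjoint pairs, so each bin is an \emph{intersecting} family of $2$-subsets of $[n]$, and then derive a contradiction from $n \ge m+3$. The only structural input I would need is the classification of intersecting families of edges, namely that every such family is either a \emph{star} (all pairs sharing a common vertex) or a \emph{triangle} (exactly the three edges of some $K_3$). This is a standard exercise: three pairwise intersecting pairs either share a common vertex or form a triangle, and a triangle admits no fourth pair intersecting all three of its edges. I would split the $m$ bins accordingly into $t$ triangle bins and $m-t$ star bins.

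For each star bin I would pick one common vertex to serve as its center, and let $C$ be the set of these chosen centers, so $|C| \le m - t$. The key observation is then that any pair $\{u,v\}$ with $u,v \notin C$ must lie in a triangle bin, because every pair in a star bin is incident to that bin's center, which lies in $C$.

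It remains to pit the count of pairs outside $C$ against the triangle budget. There are $\binom{n-|C|}{2} \ge \binom{n-m+t}{2}$ such pairs, while each triangle bin contributes at most three pairs with both endpoints outside $C$, so
$$\binom{n-m+t}{2} \le 3t.$$
Using $n \ge m+3$, the left-hand side is at least $\binom{t+3}{2} = (t^2+5t+6)/2$, whence the inequality reduces to $t^2 - t + 6 \le 0$. Since this quadratic has negative discriminant, there is no such $t \ge 0$, giving the desired contradiction.

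The only substantive step is the star/triangle dichotomy, which is routine; after that, identifying $C$ and realizing that pairs avoiding $C$ can only hide inside triangle bins reduces everything to a single pigeonhole-style inequality. I do not anticipate any real obstacle.
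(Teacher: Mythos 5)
Your proof is correct, and it takes a genuinely different route from the paper's. Both arguments proceed by contradiction and both rest on the same structural fact — that an intersecting family of pairs is either a star or exactly a triangle — which the paper derives in its opening paragraph (concluding each bin has at most $3$ or at most $m+2$ pairs). From there the paper counts globally: it orders the bins by decreasing size and argues, via a case analysis on whether the pair formed by two successive stars' centers has already been consumed, that the $k$th bin holds at most $m+3-k$ pairs, so the $m$ bins together hold at most $\binom{m+3}{2}-3 < \binom{n}{2}$ pairs. You instead localize the count: you delete the set $C$ of star centers (of size at most $m-t$ when $t$ bins are triangles), observe that every pair avoiding $C$ must land in a triangle bin, and pit the quadratic count $\binom{n-|C|}{2}\ge\binom{t+3}{2}$ against the linear budget $3t$, which fails because $t^2-t+6>0$ for every $t\ge 0$. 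Your version buys a cleaner argument — it replaces the paper's somewhat delicate ``continuing in this way'' induction on bin sizes with a single pigeonhole inequality — while the paper's version yields, as a byproduct, the explicit bound $\binom{m+3}{2}-3$ on how many pairs $m$ intersecting families can cover. Both are consistent with the sharpness of the lemma at $n=m+2$ (stars at $n-3$ of the objects plus one triangle on the remaining three).
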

\begin{proof}
Suppose there is an assignment of pairs of $m+3$ objects to $m$ bins which does not assign two disjoint pairs to the same bin.  If a bin contains two pairs, then by hypothesis they must have an object in common, say $(a,b)$ and $(a,c)$.  If that bin contains an addition pair, by hypothesis it is either of the form $(b,c)$ or $(a,d)$.  If it is of the form $(b,c)$, then any addition pair would be disjoint to one of the pairs already assigned to that bin.  If it is of the form $(a,d)$, then any additional pair which does not contain the common object $a$ would be disjoint to one of the pairs already assigned to that bin.  In the former case, the bin contains at most 3 pairs, and in the latter case it contains at most $m+2$ pairs.

Consider now the number of pairs in each bin.
Start with one of these bins with the maximum number of pairs; it contains at most $m+2$ pairs and has some common object, say $a$.
Next, take a bin with the maximum number of pairs among the remaining bins;  it has some common object $b$.
If the previous bin contained the pair $(a,b)$, then this bin does not, and contains at most $m+1$ pairs.
If the previous bin did not contain $(a,b)$, then it contained at most $m+1$ pairs and thus the present bin also contains at most $m+1$ pairs.
Continuing in this way, note that the next bin contains at most $m$ pairs, and so on, and so the kth bin considered contains at most $m + 3 - k$ pairs.
Thus we can conclude that the number of pairs contained in all bins is no more than $(m+2) + (m+1) + \ldots + 3 = \binom{m+3}{2} - 3$.
\end{proof}

\begin{lemma}
\label{m+7}
Suppose any level contains at least $m+7$ variables.  Then (\ref{form}) has a nontrivial zero.
\end{lemma}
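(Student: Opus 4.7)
The plan is to use the $m+7$ variables at a common level to manufacture two disjoint pair contractions that land at the same new level, then bootstrap via further contractions and the freeness property to produce a variable at level at least $2e+1$, at which point Lemma~\ref{quartic_hensels_lemma} finishes the proof.

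First, I would take the $m+7$ variables at level $\ell$ and consider each of the $\binom{m+7}{2}$ unordered pairs among them. Every such pair, summed, contracts to a new variable at some level $\ge \ell+1$, and the freeness property gives flexibility in how to direct that contraction. I would then assign each pair to a bin indexed by its contraction level, merging the two levels of each free pair $(2k, 2k+1)$ into a single bin and collapsing all contractions already reaching level $\ge 2e+1$ into one ``done'' bin (where Lemma~\ref{quartic_hensels_lemma} applies immediately). The point is to argue that the number of effective bins is at most $m+4$, so that with $n = m+7 = (m+4)+3$ variables, Lemma~\ref{bins} produces two disjoint pairs contracting into a common bin.

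Next, these two disjoint pairs give two contracted variables at essentially the same new level $\ell'$ (after using freeness to align them within the shared bin). Pair-contracting these two variables produces a new variable at level $\ge \ell'+1$, and combining this with the freeness inherited from the earlier contractions should let me push the final level up to at least $2e+1$, possibly after one more iteration of the same trick.

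The hard part will be verifying that the bins can be chosen so that there are at most $m+4$ of them. A naive count of possible contracted levels modulo $d=2m$ gives $2m-1$, which exceeds $m+4$ for $m \ge 6$. The reduction relies on (a) using freeness to identify adjacent levels $(2k, 2k+1)$, (b) collapsing all levels $\ge 2e+1$ into one Hensel-triggered bin, and (c) exploiting the constraints from the $\{0,1\}$-expansion of coefficients together with the relation $2 = u\pi^e$, which limit the natural contracted levels. A small case split in $e$ versus $m$ may also be needed to handle both the low-ramification and high-ramification regimes uniformly.
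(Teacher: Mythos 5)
Your overall shape---use Lemma~\ref{bins} to find two disjoint pairs contracting to a common level, then bootstrap with freeness---is the right one, but there are two genuine gaps, one of which is fatal to the plan as budgeted. First, the bin count. The paper's reduction is cleaner and smaller than your merged-bin scheme: since a pair contracted from level $0$ is free at levels $2, 4, \ldots, 2e-2$, every pair contraction may be assumed to land either at an \emph{odd} level or at level $2e$ exactly, and any pair landing at level $\ge 2e+1$ triggers Lemma~\ref{quartic_hensels_lemma} on its own (no second pair, hence no ``done'' bin, is needed). Pairs landing at level $2e$ are also handled outside the pigeonhole: two of them contract to level $\ge 2e+1$ immediately. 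So the bins for Lemma~\ref{bins} are just the $m$ odd residues modulo $d=2m$, and each application needs only $m+3$ objects. Your count of $m+4$ bins is not only unjustified (as you concede) but too large: it forces you to spend all $m+7$ variables on a single application of Lemma~\ref{bins}.

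Second, and more importantly, you have not accounted for the fact that a single chain of contractions rooted in level $0$ can only be pushed to level $2e$, never to $2e+1$: freeness reaches only $2e-2$ offsets above a contraction's base, and contracting two same-level variables guarantees only one additional level, so after aligning your two disjoint pairs at a common odd level $k$ and contracting, the resulting variable is free at all levels up to $2e-1$ and lands at exactly $2e$---and stops there. The proof therefore \emph{must} manufacture a second, independent variable at level $2e$ and contract the two of them to reach $2e+1$. That is precisely what the hypothesis $m+7 = 4 + (m+3)$ is calibrated for: the first application of Lemma~\ref{bins} (with $m$ bins) consumes $4$ variables, leaving $m+3$, enough for a second application producing the second level-$2e$ variable. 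Your ``possibly after one more iteration of the same trick'' gestures at this, but under your $m+4$-bin accounting only $m+3$ variables remain after the first round, which is not enough for a second application with $m+4$ bins. Finally, your point (c)---extracting extra level restrictions from the $\{0,1\}$-expansions and $2=u\pi^e$---is not needed and is not how the argument closes; the only inputs are the one-level gain from contracting two same-level variables, the freeness property, and Hensel's Lemma.
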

\begin{proof}
Without loss of generality, assume the level containing the $m+7$ variables is level 0.  Contracting a pair of variables from level 0 results in a variable that is free in levels $2, 4, \ldots, 2e-2$.  Thus we may assume the new variable is produced in an odd numbered level or in level $2e$.  If there are any pairs which contract to level $2e$, first perform these contractions.  If there are at least two such pairs, then the two resulting variables can be contracted to at least level $2e+1$ and a zero follows from Hensel's Lemma.  Now, consider all the pairs which contract to odd numbered levels.  There are $m$ such levels and at least $m+3$ variables with which to form pairs.  By \Cref{bins}, two pairs contract to the same odd level $k$.  Contracting this pair results in a variable which is free in levels $2, 4, \ldots, 2e-2$ and $k+2, k+4, \ldots, 2e-1$, and so can be contracted to level $2e$.  If a variable in level $2e$ was produced in the initial phase, it can be contracted with this new variable to level at least $2e+1$ and a zero follows from Hensel's Lemma.  Otherwise, there remain at least $m+3$ variables in level 0, and so by \Cref{bins}, another variable can be produced in level $2e$.  A zero follows as before.
\end{proof}

\begin{lemma}
\label{m+3_2}
Suppose level $k$ contains at least $m+3$ variables and level $k+1$ contains at least $2$ variables.  Then (\ref{form}) has a nontrivial zero.
\end{lemma}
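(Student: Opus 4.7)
The objective is to apply Hensel's Lemma (\Cref{quartic_hensels_lemma}) by producing a variable at level at least $2e+1$. The plan mirrors the template of \Cref{m+7}: manufacture two variables at level $2e$ through separate chains of contractions, then contract them together to ascend to level $\ge 2e+1$. Without loss of generality assume $k=0$, so we have at least $m+3$ variables at level $0$ and at least $2$ variables at level $1$.

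For the first chain, work with the $m+3$ level-$0$ variables exactly as in \Cref{m+7}. If two disjoint level-$0$ pairs already contract to level $2e$, their further contraction lands at level $\ge 2e+1$ and we are done. Otherwise, apply \Cref{bins} to the pairs of level-$0$ variables distributed among the $m$ odd-level bins; this yields two disjoint pairs contracting to a common odd level $k^*$. Carrying out the two nested contractions on these four variables and combining the accumulated freedoms in $\{2, 4, \ldots, 2e-2\} \cup \{k^* + 2, k^* + 4, \ldots, k^* + 2e - 2\}$ produces a single variable $y_0$ that can be placed at level $2e$.

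For the second chain, contract the two level-$1$ variables into $y_1$. If the natural contraction lands at level $\ge 2e+1$, Hensel's Lemma applies immediately. Otherwise, the freedom framework for a level-$1$-starting contraction (free in the odd levels $\{3,5,\ldots,2e-1\}$, together with the push from $2e-1$ to $2e$ via the $(1+\pi^{e-1})$-trick) allows $y_1$ to be placed at level $2e$ whenever the natural contraction reaches level at least $2e-1$. In this good case, contracting $y_0$ and $y_1$ at level $2e$ yields a variable at level $\ge 2e+1$, and Hensel's Lemma concludes.

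The delicate obstacle is the residual case in which the level-$1$ pair's natural contraction lies strictly below $2e-1$, so that $y_1$ is forced into one of a small set of low odd levels. Here $y_0$ alone is not enough; the leftover level-$0$ variables (at least $m-1$ of them after the first chain) must be used to build a partner for $y_1$. Concretely, one selects an odd level $j$ in $y_1$'s reachable set, invokes \Cref{bins} (or a case analysis of natural contraction digits) on the leftover level-$0$ pairs to locate one whose freedom set also contains $j$, contracts it to produce a second variable at level $j$, and then contracts this pair at level $j$ so that the union of freedoms from all three contractions (with starting levels $0$, $1$, and $j$) pushes the resulting variable up to level $2e$, whence contraction with $y_0$ finishes. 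Verifying that such a compatible partner always exists for every possible low natural of the level-$1$ pair is the hardest part of the argument and is exactly what makes the hypothesis ``$m+3$ variables at level $k$ plus $2$ at level $k+1$'' sharp for this lemma.
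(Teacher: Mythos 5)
Your overall skeleton (manufacture two variables at level $2e$, then contract them to reach level $2e+1$) and your first chain agree with the paper, but the residual case of your second chain --- which you yourself flag as ``the hardest part of the argument'' and leave unverified --- is a genuine gap, and the mechanism you sketch for it would not work. First, \Cref{bins} requires $m+3$ objects, and only $m-1$ level-$0$ variables remain after the first chain, so it cannot be invoked there. Second, the parities are reversed in your description: a contraction of two level-$0$ variables is free at the \emph{even} levels $2,4,\ldots,2e-2$ and lands naturally at an odd level (or at $2e$) that it cannot choose, while $y_1$ lands naturally at an \emph{even} level and is free at the odd levels $3,5,\ldots,2e-1$. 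Consequently, ``selecting an odd level $j$ in $y_1$'s reachable set and locating a level-$0$ pair whose freedom set contains $j$'' amounts to demanding a level-$0$ pair whose \emph{natural} level equals a prescribed odd number, and nothing guarantees such a pair exists.

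The point the paper exploits, and which your proposal misses, is that no such search is needed precisely because the two freedom sets have complementary parities. Take any leftover level-$0$ pair (disjoint from the at most four variables used for the reserved level-$2e$ variable; $m-1\ge 2$ suffices) and contract it to a variable $y_0'$ at natural level $i$, with $i$ odd or $i=2e$ (if $i=2e$, contract with the reserved variable and finish), and contract the level-$1$ pair to $y_1$ at natural level $j$, with $j$ even or $j\ge 2e+1$. If $i<j$, then $i$ is an odd level below $j$ and hence a free level of $y_1$, so $y_1$ can be stopped at level $i$; if $j<i$, then $j$ is an even level with $j\le 2e-2$ and hence a free level of $y_0'$, so $y_0'$ can be stopped at level $j$. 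Either way the two variables meet at a common level, and the contraction of the resulting pair inherits freedom at every level below $2e$ (even levels from $y_0'$, odd levels from $y_1$ and from the final contraction), so it produces a variable at level $2e$; contracting that with the reserved level-$2e$ variable reaches level at least $2e+1$ and \Cref{quartic_hensels_lemma} applies. This parity-matching step is exactly the content your proposal defers, so as written the argument is incomplete.
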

\begin{proof}
Without loss of generality, assume $k=0$.  As in \Cref{m+7}, either a pair from level 0 can be contracted to level $2e$, or two pairs can be contracted to the same odd level, and from there to level $2e$ by \cref{bins}.  Now, contract a pair from level 0 and level 1 to variables $y_0$ and $y_1$ at levels $i$ and $j$ respectively.  We may assume by the free levels of the variables that $i$ is odd or $2e$ and $j$ is even or $2e+1$.  Further, if $i=2e$, then $y_0$ can be contract at least one level higher.  Thus assume $i$ is odd.  If $i$ is less than $j$, then because $y_1$ is free at level $j$, we can arrange the contraction so that it goes to level $j$ instead.  The two variables in level $j$ can be contracted, resulting in a variable which is free at every level lower than $2e$, the variable is then produced in $2e$, and a zero follows as before.  Thus assume $j < i$.  Then $y_0$ is free in level $j$ and a zero follows as before.
\end{proof}

\begin{lemma}
Suppose level $k$ contains at least $4$ variables and level $k+1$ contains at least $4$ variables.  Then (\ref{form}) has a nontrivial zero.
\end{lemma}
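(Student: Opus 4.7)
My plan is to adapt the cross-level matching argument of Lemma~\ref{m+3_2} to the setup of four variables at each of the levels $k$ and $k+1$. Assume without loss of generality that $k=0$, and label the level-$0$ variables $A_1,\ldots,A_4$ and the level-$1$ variables $B_1,\ldots,B_4$. The goal is, through a sequence of contractions, to produce a variable at level at least $2e+1$, at which point Lemma~\ref{quartic_hensels_lemma} yields the required nontrivial zero.

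I begin by dispatching the trivial cases: if any single pair contracts naturally to level at least $2e+1$, or if two disjoint pair contractions (two $A$-pairs, two $B$-pairs, or one $A$-pair and one $B$-pair on disjoint variables) can each be arranged to land at level $2e$, the zero is immediate. Assume none of these is available. Next I contract $A_1+A_2 \to y_0$ at some level $i$ and $B_1+B_2 \to y_1$ at some level $j$; by the free-level mechanism I may take $i$ odd in $\{1,3,\ldots,2e-1\}$ or $i=2e$, and $j$ even in $\{2,4,\ldots,2e\}$ or $j\geq 2e+1$. The cases $j\geq 2e+1$, $i=2e$, and $j=2e$ all reduce to situations already handled --- the first by Hensel directly, and the latter two by running the same construction a second time on $A_3,A_4,B_3,B_4$ to supply a disjoint second variable at level $2e$.

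In the main remaining subcase, $i$ is odd and $j$ is even with both at most $2e-2$, and I match them as in Lemma~\ref{m+3_2}: if $j<i$ then $j$ is a free level of $y_0$ so I bring $y_0$ down to $j$; if $i<j$ and $i\geq 3$ then $i$ is a free level of $y_1$ so I bring $y_1$ down to $i$. Contracting the matched pair produces a new variable $W$ whose accumulated free levels --- inherited $\{2,4,\ldots,2e-2\}$ from $y_0$, inherited $\{3,5,\ldots,2e-1\}$ from $y_1$, and the new levels supplied by the matching contraction itself --- together permit placing $W$ at level $2e$. Running the construction symmetrically on $A_3,A_4,B_3,B_4$ produces a disjoint second variable at level $2e$, and contracting the two yields the desired variable at level at least $2e+1$.

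I expect the main obstacle to be the degenerate subcase $i=1$, where the matching trick fails because level $1$ is neither a free level nor an achievable natural contraction level for a $B$-pair. My plan there is first to try the other two disjoint partitions of $\{A_1,\ldots,A_4\}$ in place of $\{\{A_1,A_2\},\{A_3,A_4\}\}$, hoping one of them produces both $A$-pair contractions at some level at least $3$ so that the main argument applies. If instead all three disjoint partitions force both $A$-pair contractions to sit at level $1$, I combine the two new level-$1$ variables with the four $B$'s to obtain six variables at level $1$, and iterate the cross-level matching one level up. Verifying that this fallback really closes the argument --- rather than merely deferring the same difficulty to a higher level --- is the delicate part of the proof I expect to work through most carefully.
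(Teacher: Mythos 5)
Your main line is the same as the paper's: contract an $A$-pair to an odd level $i$ and a $B$-pair to an even level $j$, use the free-level mechanism to move whichever of the two can be moved down to the other's level, contract the matched pair to obtain a variable free at every level below $2e$, place it at level $2e$, repeat on $A_3,A_4,B_3,B_4$, and contract the two level-$2e$ variables to reach level $2e+1$. That part is right, and your reading of the matching step (bring $y_1$ down to $i$ when $3\le i<j$; bring $y_0$ down to $j$ when $j<i$) is in fact cleaner than the paper's own wording of it. You are also right that $i=1$ is a genuine edge case: a level-$0$ pair can land at level $1$, the free-level mechanism cannot move a variable up past an odd natural level nor down below it, and level $1$ is not attainable by any $B$-pair. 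The paper's proof passes over this case in silence.

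The problem is that your fallback for $i=1$ does not close the case. ``Iterating the cross-level matching one level up'' requires variables at level $2$, which you do not have; and treating the two level-$1$ contraction outputs together with the four $B$'s as six generic level-$1$ variables is not enough either, since three disjoint pairs distributed among the $e$ possible even landing levels admit no pigeonhole collision when $e$ is large (compare Lemma~\ref{bins}, which needs $e+3$ objects for $e$ bins). The fix is simpler, and you are one step away from it: a variable $y_0$ produced at level $1$ from a level-$0$ pair still carries its freedom at the even levels $2,4,\ldots,2e-2$, because the underlying substitution can be perturbed to inject a term at any of those levels. So contract $y_0$ directly with a \emph{single} original level-$1$ variable $B_1$. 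This new contraction contributes freedom at the odd levels $3,5,\ldots,2e-1$, so the result is free at every level from $2$ through $2e-1$ and can be pushed to level $2e$ (or beyond, giving Hensel's Lemma at once), using only three variables. With four variables in each of the two levels there is ample room to run this twice, or to mix it with the generic matching step on the second pass, which completes the proof.
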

\begin{proof}
Contract a pair from level 0 and level 1 to variables $y_0$ and $y_1$ at levels $i$ and $j$ respectively.  We may assume by the free levels of the variables that $i$ is odd or $2e$ and $j$ is even or $2e+1$.  If $i=2e$, proceed to the next step.  Otherwise, assume $i$ is odd.  If $i$ is less than $j$, then because $y_1$ is free at level $j$, we can arrange the contraction so that it goes to level $j$ instead.  The two variables in level $j$ can be contracted, resulting in a variable which is free at every level lower than $2e$, the variable is then produced in $2e$, and a zero follows as before.  Thus assume $j < i$.  Then $y_0$ is free in level $j$ and a zero follows as before.

Repeat this process again, producing a second variable at level $2e$.  A zero follows.
\end{proof}

If the form contains at least $\frac{d^2}{4} + 3d + 1$, then by normalization it falls under one of the three lemmas above, completing the proof of \Cref{main}.

\bibliographystyle{plain}
\bibliography{biblio}

\end{document}